\let\eps\varepsilon
\let\ol\overline
\newcommand{\M}{{\mathrm M}}
\newcommand{\house}[1]{\hbox{\vrule width.4pt%
 \vbox{{\hrule height.4pt}\vskip2.5pt\hbox{\,$#1$\,}}\vrule width.4pt}}
\newtheorem{theorem}{Theorem}
\newtheorem{question}{Question}
\begin{document}

\title{Hedgehogs in Lehmer's problem}

\author{Jan-Willem M. van Ittersum}
\address{Mathematisch Instituut, Universiteit Utrecht, Postbus 80.010, 3508~TA Utrecht, Netherlands}
\address{
Max-Planck-Institut f\"ur Mathematik, Vivatsgasse 7, 53111 Bonn, Germany}
\email{j.w.m.vanittersum@uu.nl}

\author{Berend Ringeling}
\address{Department of Mathematics, IMAPP, Radboud University, PO Box 9010, 6500~GL Nijmegen, Netherlands}
\email{b.ringeling@math.ru.nl}

\author{Wadim Zudilin}
\address{Department of Mathematics, IMAPP, Radboud University, PO Box 9010, 6500~GL Nijmegen, Netherlands}
\email{w.zudilin@math.ru.nl}

\dedicatory{To Gunther Cornelissen, with warm wishes, on the nice ocassion of reaching the age\\(for the first time!) that can be written as a sum of two positive squares in two different ways.\\Niet elke egel is stekelig!}

\begin{abstract}
Motivated by a famous question of Lehmer about the Mahler measure we study and solve its analytic analogue.
\end{abstract}

\date{31 May 2021}

\thanks{The work of the second author is supported by NWO grant OCENW.KLEIN.006.}

\maketitle
%==================================================

\section{Introduction}
\label{sec1}

Several deep arithmetic questions are known about polynomials with integer coefficients.
One of them raised by Lehmer in the 1930s asks, for a monic irreducible polynomial $P(x)=\prod_{j=1}^d(x-\alpha_j)\in\mathbb Z[x]$, whether the quantity $\M(P(x))=\prod_{j=1}^d\max\{1,|\alpha_j|\}$ can be made arbitrary close to but larger than~1.
The characteristic $\M(P(x))$ is known as the Mahler measure \cite{BZ20};
in spite of the name coined after Mahler's works in the 1960s, many results about it are rather classical.
One of them, due to Kronecker, says that $\M(P(x))=1$ if and only if $P(x)=x$ or the polynomial is cyclotomic, that is, all its zeros are roots of unity.

A related question, usually considered as a satellite to Lehmer's problem, about the so-called house of a non-zero algebraic integer $\alpha$ defined through its minimal polynomial $P(x)\in\mathbb Z[x]$ as $\house{\alpha}=\max_j|\alpha_j|$, was asked by Schinzel and Zassenhaus in the 1960s and answered only recently by Dimitrov \cite{Di19}.
He proved that $\house{\alpha}\ge2^{1/(4d)}$ for any non-zero algebraic integer $\alpha$ which is not a root of unity; the latter option clearly corresponds to $\house{\alpha}=1$.

Dimitrov's ingenious argument transforms the arithmetic problem into an analytic one.
In this note we discuss potentials of Dimitrov's approach to Lehmer's problem.

\section{Principal results}
\label{sec2}

Consider a monic irreducible \emph{non-cyclotomic} polynomial $P(x)=\prod_{j=1}^d(x-\alpha_j)\in\mathbb Z[x]$ of degree $d>1$ and assume that the polynomial $\prod_{j=1}^d(x-\alpha_j^2)\in\mathbb Z[x]$ is irreducible as well. (Otherwise the Mahler measure of $P(x)$ is bounded from below through the measures of irreducible factors of the latter polynomial.)
As in~\cite{Di19}, Dimitrov's cyclotomicity criterion together with Kronecker's rationality criterion and a theorem of P\'olya imply that the \emph{hedgehog}
$$
K=K(\beta_1,\dots,\beta_n)=\bigcup_{k=1}^n[0,\beta_j]
=\bigcup_{j=1}^d[0,\alpha_j^2]\cup\bigcup_{j=1}^d[0,\alpha_j^4],
$$
whose spines originate from the origin and end up at $\alpha_j^2,\alpha_j^4$ for $j=1,\dots,d$,
has (logarithmic) capacity (\emph{aka} transfinite diameter) $t(K)$ at least~1.
Then Dubinin's theorem \cite{Du85} applies, which claims that $t(K)\le4^{-1/n}\max_j|\beta_j|$ (with the equality attained if and only if the hedgehog $K$ is rotationally symmetric), and produces the estimate for $\house{\alpha_1}=\bigl(\max_j|\beta_j|\bigr)^{1/4}$ since $n\le2d$.

When dealing with Lehmer's problem instead, one becomes interested in estimating the `Mahler measure of hedgehog', namely the quantity $\prod_{j=1}^n\max\{1,|\beta_j|\}$, because any non-trivial (bounded away from~1) absolute estimate for it would imply a non-trivial estimate for the Mahler measure of~$P(x)$.
In this setting, Dubinin's theorem only implies the estimate $\prod_{j=1}^n\max\{1,|\beta_j|\}\ge4^{1/n}$ for a hedgehog of capacity at least~1, which depends on~$n$.
The Mahler measure of the rotationally symmetric hedgehog on $n$ spines, which is optimal in Dubinin's result, is equal to~4 (thus, independent of~$n$), which certainly loses to the Mahler measure $1.91445008\dots$ of the `Lehmer hedgehog' attached to the polynomial $x^{10}+x^9-x^7-x^6-x^5-x^4-x^3+x+1$ but also to the measure $3.07959562\dots$ of hedgehog constructed on Smyth's polynomial $x^3-x-1$.
The following question arises in a natural way.

\begin{question}
\label{qu1}
What is the minimum of $\prod_{j=1}^n\max\{1,|\beta_j|\}$ taken over all hedgehogs $K=K(\beta_1,\dots,\beta_n)$ of capacity at least~$1$\textup?
\end{question}

Notice that answering this question for hedgehogs of capacity exactly $1$ is sufficient, since the capacity satisfies $t(K_1)\le t(K_2)$ for any compacts $K_1\subset K_2$ in~$\mathbb C$.

In order to approach Question~\ref{qu1} we use a different construction of hedgehogs outlined in the post of Eremenko to the question in~\cite{Le11} with details exposed in~\cite{Sc20}.
Any hedgehog $K=K(\beta_1,\dots,\beta_n)$ of capacity \emph{precisely} $1$ is in a bijective correspondence (up to rotation!) with the set of points $z_1,\dots,z_n$ on the unit circle with prescribed \emph{positive} real weights $r_1,\dots,r_n$ satisfying $r_1+\dots+r_n=1$.
Namely, the mapping
$$
F(z)=\prod_{k=1}^n\big((z-z_k)(z^{-1}-\ol z_k)\big)^{r_k}
$$
is a Riemann mapping of the complement of the closed unit disk to the complement $\hat{\mathbb C}\setminus K$ of hedgehog.
It is not easy to write down the corresponding $\beta_j$ explicitly but for their absolute values we get
$$
|\beta_j|=\max_{z\in[z_{j-1},z_j]}|F(z)|
=\max_{z\in[z_{j-1},z_j]}\prod_{k=1}^n|z-z_k|^{2r_k}
\quad\text{for}\; j=1,\dots,n,
$$
where we take conventionally $z_0=z_n$ and understand $[z_{j-1},z_j]$ as \emph{arcs} of the unit circle.
It means that if $C\ge1$ is the minimum of
$$
\prod_{j=1}^n\max\bigg\{1,\max_{z\in[z_{j-1},z_j]}\prod_{k=1}^n|z-z_k|^{r_k}\bigg\}
$$
taken over all $n$ and all possible weighted configurations $z_1,\dots,z_n$, then $C^2$ is the minimum in Question~\ref{qu1}.

Furthermore, in the spirit of \cite{KL02} observe that from the continuity considerations it suffices to compute the required minimum $C$ for \emph{rational} positive weights $r_1,\dots,r_n$.
Assuming the latter and writing $r_j=a_j/m$ for positive integers $a_1,\dots,a_n$ and $m=a_1+\dots+a_n$, we are for the $m$th root of the minimum of
$$
\prod_{j=1}^n\max\bigg\{1,\max_{z\in[z_{j-1},z_j]}\prod_{k=1}^n|z-z_k|^{a_k}\bigg\}
=\prod_{j=1}^m\max\bigg\{1,\max_{z\in[z_{j-1}',z_j']}\prod_{k=1}^m|z-z_k'|\bigg\},
$$
where $z_1',z_2',\dots,z_m'$ is the multi-set
$$
\underbrace{z_1,\dots,z_1}_{a_1 \;\text{times}}, \;
\underbrace{z_2,\dots,z_2}_{a_2 \;\text{times}}, \; \dots, \;
\underbrace{z_n,\dots,z_n}_{a_n \;\text{times}}
$$
with prescribed weights all equal to~1.
This means that it is enough to compute the minimum for the case of equal weights, $r_1=\dots=r_n=1/n$, and we may give the following alternative formulation of Question~\ref{qu1}.

\begin{question}
\label{qu2}
What is the minimum $C_n$ of
$$
\prod_{j=1}^n\max\bigg\{1,\max_{z\in[z_{j-1},z_j]}\prod_{k=1}^n|z-z_k|\bigg\}^{1/n}
$$
taken over all configurations of points $z_1,\dots,z_n$ on the unit circle $|z|=1$\textup?
The points are not required to be distinct and $[z_{j-1},z_j]$ is understood as the corresponding arc of the circle, $z_0$ is identified with~$z_n$.
\end{question}

Though there is no explicit requirement on the order of precedence, the minimum corresponds to the successive location of $z_1,\dots,z_n$ on the circle.

A comparison with Dubinin's result suggests that good candidates for the minima in Question~\ref{qu2} may originate from configurations, in which all factors in the defining product but one are equal to~$1$.
In our answer to the question we show that this is essentially the case by computing the related minima $C_n^*$ explicitly.

\begin{theorem}
\label{th1}
For the quantity $C_n$ we have the inequality $C_n\le C_n^*$,
where $C_n^*=\big(T_n(2^{1/n})\big)^{1/n}$ and
$$
T_n(x)
=\sum_{k=0}^{\lfloor n/2\rfloor}\binom{n}{2k}(x^2-1)^kx^{n-2k}
$$
denotes the $n$th Chebyshev polynomial of the first kind.
\end{theorem}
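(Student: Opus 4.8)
The plan is to establish the bound by exhibiting, for every $n$, one configuration on the unit circle whose value of the product in Question~\ref{qu2} equals $T_n(2^{1/n})$; since $C_n$ is a minimum over all configurations, this suffices. I would search among configurations that are symmetric under complex conjugation and whose single ``long'' arc is centred at $z=-1$. Putting $z=e^{i\theta}$ and $x=\cos\theta$, such a configuration of $n$ nodes turns the inner product into $\prod_{k}|z-z_k|=2^{\lfloor n/2\rfloor}\sqrt{w(x)}\,|p(x)|$, where $p$ is a real monic polynomial of degree $\lfloor n/2\rfloor$ whose roots are the cosines of the angles of the conjugate pairs, and where $w\equiv1$ for even $n$ while $w(x)=2(1-x)$ for odd $n$ (the latter weight arising from an extra node at $z=1$, since $|e^{i\theta}-1|=\sqrt{2(1-x)}$). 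In either case the $n$ nodes leave $n-1$ short arcs on the side of $z=1$ and one long arc around $z=-1$.

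The core of the construction is to choose $p$ so that $g(\theta):=2^{\lfloor n/2\rfloor}\sqrt{w(x)}\,|p(x)|$ equioscillates at height $1$ over the short arcs, i.e. each of its $n-1$ short-arc maxima equals $1$. This leads one to take $p$ to be a rescaled Chebyshev polynomial on a real subinterval $[b,1]$: the first kind $T_m$ when $n=2m$, and the fourth kind $W_m$ (normalised by $W_m(\cos\phi)=\sin((m+\tfrac12)\phi)/\sin(\tfrac\phi2)$, so that $\sqrt{1-x}\,|W_m|$ equioscillates and matches the weight) when $n=2m+1$. Requiring the supremum of $g$ on $[b,1]$ to be exactly $1$ pins the left endpoint to an explicit power of two, namely $b=1-2^{(m-1)/m}$ for even $n$ and $b=1-2^{(2m-1)/(2m+1)}$ for odd $n$; the Chebyshev sup-norm bound then gives $g\le1$ on every short arc, so indeed all but one factor of the product equal $1$.

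There remains the single surviving factor, the maximum of $g$ over the long arc. Because $b>-1$ and, to the left of $[b,1]$, both $\sqrt{w}$ and $|p|$ are monotone, this maximum is attained at $z=-1$, i.e. at $x=-1$. Writing $v=2^{1/n}=\cosh\eta$ and evaluating there, I would obtain $g(-1)=T_n(v)$ from $T_m(2v^2-1)=T_{2m}(v)$ (that is, $T_m(\cosh 2\eta)=\cosh 2m\eta$) in the even case, and from the identity $W_m(1-2v^2)=(-1)^m\,T_{2m+1}(v)/v$ in the odd case, the latter following from $\sin((2m+1)\psi)=\sin\psi\,U_{2m}(\cos\psi)$ continued to $\psi=\tfrac\pi2-i\eta$. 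Hence the product equals $T_n(2^{1/n})$ and $C_n\le\bigl(T_n(2^{1/n})\bigr)^{1/n}=C_n^*$.

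I expect the odd case to be the main obstacle, for two genuinely delicate reasons. First, one must use precisely the right fourth-kind polynomial: a wrong sign or normalisation produces an admissible but far-from-optimal configuration (for instance, when $n=3$ it replaces the conjugate pair with $\cos\alpha=0.055\ldots$ by the one with $\cos\alpha=0.685\ldots$, enlarging the resulting product from $T_3(2^{1/3})=4.22\ldots$ to $6.74\ldots$). Second, the evaluation at $z=-1$ lands $W_m$ at an argument well to the left of $-1$, so the match with $T_{2m+1}(2^{1/(2m+1)})$ must pass through the trigonometric-to-hyperbolic continuation above rather than a bare composition identity; by contrast the even case reduces cleanly to $T_{2m}(y)=T_m(2y^2-1)$. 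A secondary verification, routine but worth spelling out, is that the equioscillating polynomial keeps every short-arc maximum $\le1$ simultaneously and that the long-arc maximum truly sits at $z=-1$; both follow from the standard extremal properties of Chebyshev polynomials.
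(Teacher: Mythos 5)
Your proof is correct, and its core strategy is the same as the paper's: exhibit the configuration whose $n-1$ short-arc maxima all equal $1$, show the long-arc maximum sits at $z=-1$, and evaluate it to $T_n(2^{1/n})$; since $C_n$ is a minimum over configurations, the bound follows. For even $n=2m$ your construction is literally the paper's, which takes $Q(z)=z^m\,T_m\bigl(2^{1/m}\bigl((z+z^{-1})/2-1\bigr)+1\bigr)$: in your variable $x=\cos\theta$ this is exactly the affine-rescaled $T_m$ on $[1-2^{(m-1)/m},1]$, with the same duplication-formula evaluation at $z=-1$. The genuine difference is the odd case. The paper sidesteps the parity split altogether by writing the single formula $Q(z)=\pm(-z)^{n/2}\,T_n\bigl(2^{1/n-1}\sqrt{2-(z+z^{-1})}\bigr)$, valid for both parities, in which the node at $z=1$ arises automatically from the zero of $T_n$ at the origin and the evaluation $|Q(-1)|=T_n(2^{1/n})$ is immediate; you instead run a weighted equioscillation argument with the fourth-kind polynomial $W_m$ and the weight $\sqrt{2(1-x)}$ contributed by the node at $z=1$. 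The two constructions produce the same polynomial---your identity $W_m(1-2v^2)=(-1)^m\,T_{2m+1}(v)/v$ is precisely what identifies them---consistent with the uniqueness that both you and the paper remark on. What your route buys is that the mechanism forcing all short-arc maxima to equal $1$ is explicit in classical approximation-theoretic terms, including the exact pinning of $b=1-2^{(2m-1)/(2m+1)}$ and your $n=3$ numerics, which check out ($\cos\alpha=1-3\cdot2^{1/3}/4\approx0.055$, product $T_3(2^{1/3})=8-3\cdot2^{1/3}\approx4.22$); what the paper's route buys is a uniform, parity-free formula whose verification takes two lines. Your auxiliary checks (monotonicity of $\sqrt{w}$ and $|p|$ to the left of the smallest root of $p$, so that the long-arc maximum is attained at $x=-1$, with the sup-norm bound covering the rest of that arc) are also sound.
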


\begin{theorem}
\label{th2}
For the quantity $C_n^*$ in Theorem~\textup{\ref{th1}} we have the asymptotic expansion
$$
C_n^*=1 + \nu - \frac14\nu^3 + \frac5{96}\nu^5 - \frac1{128}\nu^7 + O(\nu^9)
$$
in terms of $\nu=\sqrt{(\log4)/n}$, as $n\to\infty$. In particular, $(C_n^*)^{\sqrt n}\to e^{\sqrt{\log4}}$ and $C_n^*\to1$ as $n\to\infty$.
\end{theorem}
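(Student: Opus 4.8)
The plan is to exploit the hyperbolic-cosine representation of the Chebyshev polynomial, which converts $C_n^*$ into an essentially closed form whose Taylor expansion in $\nu$ can be read off directly. Since $2^{1/n}\ge1$, I would introduce the quantity $v=v(n)\ge0$ determined by $\cosh v=2^{1/n}$, that is, $v=\operatorname{arccosh}(2^{1/n})$. Writing $\nu^2=(\log4)/n$ we have $2^{1/n}=e^{\nu^2/2}$, so $\cosh v=e^{\nu^2/2}$, and the defining identity $T_n(\cosh v)=\cosh(nv)$ gives
$$
C_n^*=\big(\cosh(nv)\big)^{1/n}.
$$

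The key simplification is to discard the subdominant hyperbolic term. Factoring $\cosh(nv)=\tfrac12e^{nv}(1+e^{-2nv})$ yields $C_n^*=2^{-1/n}e^{v}(1+e^{-2nv})^{1/n}$, and since $2^{-1/n}=e^{-\nu^2/2}=1/\cosh v$ this becomes
$$
C_n^*=(1+\tanh v)\,(1+e^{-2nv})^{1/n},
$$
using $e^v/\cosh v=1+\tanh v$. I would then check that the second factor is negligible for the expansion: because $v\sim\nu$ and $n=(\log4)/\nu^2$, one has $nv\sim(\log4)/\nu\to\infty$, so $e^{-2nv}$ decays faster than any power of $\nu$, and $(1+e^{-2nv})^{1/n}=1+O(\nu^2e^{-2nv})$ contributes nothing to the power series in~$\nu$.

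It then remains to expand $1+\tanh v$ in $\nu$. Here I would avoid solving for $v$ itself by using $\tanh^2v=1-\operatorname{sech}^2v$ together with $\operatorname{sech}v=1/\cosh v=e^{-\nu^2/2}$, which gives the clean closed form
$$
C_n^*=1+\sqrt{1-e^{-\nu^2}}+(\text{terms beyond all orders in }\nu).
$$
Expanding $1-e^{-\nu^2}=\nu^2\big(1-\tfrac12\nu^2+\tfrac16\nu^4-\tfrac1{24}\nu^6+\cdots\big)$ and applying the binomial series to the square root produces $\sqrt{1-e^{-\nu^2}}=\nu-\tfrac14\nu^3+\tfrac5{96}\nu^5-\tfrac1{128}\nu^7+O(\nu^9)$, which is precisely the claimed expansion. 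The two limiting statements then follow at once: $C_n^*=1+\sqrt{1-e^{-\nu^2}}\to1$ as $\nu\to0$, and since $\sqrt n=\sqrt{\log4}/\nu$ one gets $\sqrt n\,\log C_n^*=\sqrt n\,\log\big(1+\sqrt{1-e^{-\nu^2}}\big)\to\sqrt{\log4}$, whence $(C_n^*)^{\sqrt n}\to e^{\sqrt{\log4}}$.

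I expect the only genuinely delicate point to be the justification that the factor $(1+e^{-2nv})^{1/n}$ may be dropped — confirming that $e^{-2nv}$ is beyond all polynomial orders in $\nu$ uniformly enough to perturb no coefficient of the expansion. Everything else reduces to the routine bookkeeping of two nested Taylor expansions, and the striking cancellation of all even-order terms in $C_n^*$ (despite their presence in $\log C_n^*=v-\nu^2/2$) is explained transparently by the closed form $C_n^*=1+\sqrt{1-e^{-\nu^2}}$.
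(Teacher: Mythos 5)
Your argument is correct and is essentially the paper's own proof written in hyperbolic notation: the identity $T_n(\cosh v)=\cosh(nv)$ with $\cosh v=2^{1/n}=e^{\nu^2/2}$, the factorization $\cosh(nv)=\tfrac12 e^{nv}\bigl(1+e^{-2nv}\bigr)$, and the relation $\tanh v=\sqrt{1-e^{-\nu^2}}$ reproduce exactly the paper's decomposition $T_n(2^{1/n})=(1+\sqrt{1-e^{-\nu^2}})^n+(1-\sqrt{1-e^{-\nu^2}})^n$ with the subdominant term discarded, followed by the same binomial expansion of $\sqrt{1-e^{-\nu^2}}$ in powers of $\nu$. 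If anything, your treatment of the discarded term is slightly more accurate than the paper's: since $nv\sim\sqrt{n\log4}$, that term is $e^{-O(\sqrt n)}$ --- super-polynomially small in $\nu$, which is all that is needed --- whereas the paper's stated bound $O(\varepsilon^n)$ is a harmless overstatement.
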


Thus, our results imply that the minimum in Question~\ref{qu1} is equal to~$1$, meaning that an analogue of Lehmer's problem in an analytic setting is trivial.
This brings no consequences to Lehmer's problem itself, as we are not aware of a recipe to cook up polynomials in $\mathbb Z[x]$ from optimal (or near optimal) configurations of $z_1,\dots,z_n$ on the unit circle.

\section{Proofs}
\label{sec3}

\begin{proof}[Proof of Theorem~\textup{\ref{th1}}]
We look for a configuration of the points $z_1,\dots,z_n$ on the unit circle such that the maximum of $|Q(z)|$, where $Q(z)=(z-z_1)\dotsb(z-z_n)$, on all the arcs $[z_{j-1},z_j]$ but one is equal to~$1$:
$$
\max_{z\in[z_{j-1},z_j]}|Q(z)|=|Q(z_j^*)|=1 \quad\text{for}\; z_j^*\in(z_{j-1},z_j), \quad\text{where}\; j=2,\dots,n.
$$
At the same time, the $k$th Chebyshev polynomial $T_k(x)=2^{k-1}x^k+\dotsb$ is known to satisfy $|T_k(x)|\le1$ on the interval $-1\le x\le1$, with \emph{all} the extrema on the interval to be either $-1$ or~$1$.
Note that $T_k(x)$ has $k$ distinct real zeroes on the open interval $-1<x<1$ and satisfies $T_k(1)=(-1)^kT_k(-1)=1$.
Therefore, setting for $n=2k$ even,
\begin{equation*}
Q(z)=z^k\,T_k\Big(2^{1/k}\Big(\frac{z+z^{-1}}2-1\Big)+1\Big),
\end{equation*}
we get a \emph{monic} polynomial of degree $n$ with the desired properties; its zeroes $z_1,\dots,z_n$ ordered in pairs, $z_{n-j}=\ol z_j=z_j^{-1}$ for $j=1,\dots,k$, correspond to the real zeroes $2^{1/k}\big((z_j+z_j^{-1})/2-1\big)+1$ of the polynomial $T_k(x)$ on the interval $-1<x<1$.
Then
\begin{align*}
\max_{z\in[z_n,z_1]}|Q(z)|=\max_{|z|=1}|Q(z)|=|Q(-1)|
=|T_k(1-2^{1+1/k})|&=T_k(2^{1+1/k}-1)
\\
&=T_{2k}(2^{1/(2k)}),
\end{align*}
where the duplication formula $T_k(2x^2-1)=T_{2k}(x)$ was applied.

The duplication formula in fact allows one to write the very same polynomial~$Q(z)$ in the form
$$
Q(z)=\pm (-z)^{n/2}\, T_n\big(2^{1/n - 1} {\textstyle\sqrt{2 - (z+z^{-1})}}\,\big),
$$
and this formula gives the wanted polynomial, monic and of degree $n$, for $n$ of any parity.
If we set $k=\lfloor(n+1)/2\rfloor$, the zeroes $z_1,\dots,z_n$ of $Q(z)$ pair as before, $z_{n-j}=\ol z_j=z_j^{-1}$ for $j=1,\dots,k$, with the two zeroes merging into one, $z_{(n+1)/2}=1$ for $j=k$ when $n$~is odd,
so that $2^{1/n - 1} \sqrt{2- \smash[b]{(z_j+z_j^{-1})}}$ for $j=1,\dots,k$ are precisely the $k$ real zeroes of the polynomial $T_n(x)$ on the interval $0\le x<1$.
This leads to the estimate
$$
\max_{z\in[z_n,z_1]}|Q(z)|=\max_{|z|=1}|Q(z)|=|Q(-1)|=T_n(2^{1/n})
$$
for both even and odd values of $n$.

Finally, we remark that the \emph{uniqueness} of $Q(z)$, up to rotation, follows from the extremal properties of the Chebyshev polynomials.
\end{proof}

\begin{proof}[Proof of Theorem~\textup{\ref{th2}}]
For this part we cast the Chebyshev polynomial $T_n(x)$ in the form
$$
T_n(x)
=\frac{(x+\sqrt{x^2-1})^n+(x-\sqrt{x^2-1})^n}2
=\frac{x^n}2\cdot
\textstyle\bigl((1+\sqrt{1-x^{-2}})^n+(1-\sqrt{1-x^{-2}})^n\bigr)
$$
leading to
$$
T_n(2^{1/n})
=\textstyle(1+\sqrt{1-e^{-\nu^2}})^n+(1-\sqrt{1-e^{-\nu^2}})^n
$$
in the notation $\nu=\sqrt{(\log4)/n}$.
Since
\begin{align*}
{\textstyle\sqrt{1-e^{-\nu^2}}}
&=\biggl(\sum_{k=1}^\infty\frac{(-1)^{k-1}\nu^{2k}}{k!}\biggr)^{\! 1/2}
=\nu\biggl(1+\sum_{k=2}^\infty\frac{(-1)^{k-1}\nu^{2k-2}}{k!}\biggr)^{\! 1/2}
\\
&=\nu\cdot\bigg(1 - \frac14\nu^2 + \frac5{96}\nu^4 - \frac1{128}\nu^6 + \frac{79}{92160}\nu^8 - \frac3{40960}\nu^{10}+O(\nu^{12})\bigg),
\end{align*}
we conclude that the term $(1-\sqrt{1-e^{-\nu^2}})^n=O(\eps^n)$ for any choice of positive $\eps<1$, hence
\begin{equation*}
\big(T_n(2^{1/n})\big)^{1/n}
=(1+\textstyle\sqrt{1-e^{-\nu^2}})\cdot\big(1+O(\eps^n)\big),
\end{equation*}
and the required asymptotics follows.
\end{proof}

\section{Speculations}
\label{sec4}

Dimitrov's estimate $t(K)\ge1$ for the capacity of the hedgehog $K=K(\beta_1,\dots,\beta_n)$ assigned to a polynomial in $\mathbb Z[x]$ is not necessarily sharp, and one would rather expect to have $t(K)\ge t$ for some $t>1$.
By replacing the polynomial in the proof of Theorem~\ref{th1} with
$$
Q(z)=\pm (-z)^{n/2}\, T_n\big(2^{1/n - 1}t {\textstyle\sqrt{2 - (z+z^{-1})}}\,\big)
$$
and assuming (or, better, believing!) that the corresponding minimum in Question~\ref{qu2} is indeed attained in the case when all but one factors are equal to~1, we conclude that the minimum is equal to $\bigl(T_n(2^{1/n}t)\bigr)^{\! 1/n}$.
The asymptotics of the Chebyshev polynomials then converts this result into the answer
$$
\inf_{\substack{n=1,2,\dots\\ K = K(\beta_1, \dots, \beta_n)\\ t(K) \ge t}}\prod_{j=1}^n\max\{1,|\beta_j|\}
\ge t+\textstyle\sqrt{t^2-1}
$$
to the related version of Question~\ref{qu1}.
This is slightly better, when $t>1$, than the trivial estimate of the infimum by $t$ from below.

In another direction, one may try to associate hedgehogs $K$ to polynomials in a different (more involved!) way, to achieve some divisibility properties for the Hankel determinants $A_k$ that appear in estimation $t(K)\ge\limsup_{k\to\infty}|A_k|^{1/k^2}$ of the capacity on the basis of P\'olya's theorem.
Such an approach has potential to lead to some partial (`Dobrowolski-type') resolutions of Lehmer's problem.
Notice however that the bound for $t(K)$ in P\'olya's theorem is not sharp:
numerically, the Hankel determinants $A_k=\det_{0\le i,j<k}(a_{i+j})$ constructed on (Dimitov's) irrational series
\begin{align*}
\sum_{k=0}^\infty a_kx^k
&=\textstyle\sqrt{(x-\alpha_1^2)(x-\alpha_2^2)(x-\alpha_3^2)\,(x-\alpha_1^4)(x-\alpha_2^4)(x-\alpha_3^4)}
\\
&=\textstyle\sqrt{(1-x+2x^2-x^3)(1+3x+2x^2-x^3)}\in\mathbb Z[[x]]
\end{align*}
for Smyth's polynomial $x^3-x-1=(x-\alpha_1)(x-\alpha_2)(x-\alpha_3)$ satisfy $|A_k|\le C^k$ for some $C<2.5$ and all $k\le150$, so that it is likely that $\limsup_{k\to\infty}|A_k|^{1/k^2}=1$ in this case.

\medskip\noindent
\textbf{Acknowledgements.}
The third author thanks Yuri Bilu and Laurent Habsieger for inspirational conversations on Lehmer's and Schnizel--Zassenhaus problems.

\end{document}